\newtheorem{thm}{Theorem}[section]
\theoremstyle{definition}
\theoremstyle{remark}
\numberwithin{equation}{section} \hfuzz3pt
\begin{document}

\setcounter{page}{1} \vspace{0.1cm}
\begin{center}
{\large{\bf An Extension of Clark-Haussman Formula and Applications   }}\\
 \vspace{0.4cm}

{\bf Traian A. Pirvu$^1$ and Ulrich G. Haussmann$^2$ }\\
$^1$ Department of Mathematics and Statistics, McMaster University\\
1280 Main Street West, Hamilton, ON, L8S 4K1\\
tpirvu@math.mcmaster.ca\\
$^2$  Department of Mathematics, The University of British Columbia\\ Vancouver, BC, V6T1Z2  \\uhaus@math.ubc.ca
 \vspace{0.4cm}

\end{center}

\begin{abstract}
This work considers a stochastic model in which the uncertainty is driven by a multidimensional Brownian motion.
The market price of risk process makes the transition between real world probability measure and risk neutral probability measure. 
Traditionally, the martingale representation formulas under the risk neutral probability measure requires the market price of risk process
to be bounded. However, in several financial models the boundedness assumption of the market price of risk fails. One example is
a stock price model with  the market price of risk following an Ornstein-Uhlenbeck process. This work extends Clark-Haussmann formula to underlying stochastic processes which fail to satisfy the standard requirements. Our result can be applied to hedging and optimal investment in stock markets with unbounded market price of risk.
\end{abstract}

\section{Introduction}
Clark-Haussmann-Ocone formula expresses the integrand from the Martingale Representation Theorem 
in terms of Fr\'echet (Clark-Haussmann) or Malliavin's (Ocone) derivative. 
In financial applications the Martingale Representation Theorem is performed
under the martingale measure. Within this setting Ocone and Karatzas \cite{KAR2} specializes
Clark-Haussmann-Ocone formula to obtain
\begin{equation}\label{=}
V(T)={\tilde{E}}V(T)+\int_{0}^{T}{\tilde{E}}\left[\left(D_{t}V(T)-V(T)\int_{t}^{T}D_{t}\tilde{\theta}(u)d\tilde{W}(u)\right)\bigg\vert\mathcal{F}(t)\right]d\tilde{W}(t). 
\end{equation}
Here $V(T)$ is a random variable on a filtered probability space
$(\Omega,\{\mathcal{F}_t\}_{0\leq t\leq T},\mathcal{F},P),$ $D_{t}V(T)$ is Malliavin derivative, and $\tilde{W}$ is a Brownian motion under the martingale probability
measure $Q.$ This probability measure is defined by $Q(A):=E[\tilde{Z}(T){\bf{1}}_{A}],$ and $ \tilde{Z}(t):=\exp\left\{-\int_{0}^{t}\tilde{\theta}(u)\,dW(u)-\frac{1}{2}\int_{0}^{t} \tilde{\theta}^2(u)\,du\right\};$ $\tilde{\theta}$ is the market price of risk, and ${\tilde{E}}$ is the expectation operator under $Q.$

The representation formula (\ref{=}) is important in pricing and hedging of derivative securities. For instance if $V(T)$ is the payoff of a derivative security
 formula (\ref{=}) yields the hedging portfolio. The representation formula (\ref{=}) can be applied in portfolio management. Indeed the problem of finding optimal portfolios can be reduced by the martingale method to the problem of hedging a derivative security.

The representation formula (\ref{=}) works under the boundeness assumption of the market price of risk process $\tilde{\theta}$
 (see Theorem $2.5$ in \cite{KAR2}). In some models (see Kim and Omberg \cite{KIM}), $\tilde{\theta}$ is an Ornstein-Uhlenbeck process which fails to remain bounded. 
 
 {\bf The contribution of this paper is the extension of (\ref{=}) to models with unbounded $\tilde{\theta}.$} In particular our extension cover the case in which  $\tilde{\theta}$  is an Ornstein-Uhlenbeck process.

The remainder of this paper is structured as follows. Section $2$ presents the model. Section $3$ contains the main result.  We conclude with an appendix containing the proof of the main result.

\section{Model Description}

Let $T>0$ be a finite deterministic horizon. Here $W=(W_{1},\cdots,W_{n})^{T}$ is a $n-$dimensional Brownian motion on a filtered probability space
$(\Omega,\{\mathcal{F}_t\}_{0\leq t\leq T},\mathcal{F},P),$ where $\{\mathcal{F}_t\}_{0\leq t\leq T}$ is the completed filtration generated by $W.$ Let $\tilde{\theta}=(\tilde{\theta}_{1},\cdots, \tilde{\theta}_{n})^{T}$ be a $n-$dimensional adapted stochastic process. We assume that
\begin{equation}\label{narde} E\left[\exp\left(\frac{1}{2}\int_{0}^{T}\parallel\tilde{\theta}(u)\parallel^{2}\,du\right)\right]<\infty, \end{equation}
 where as usual $||\cdot||$ denotes the Euclidean norm in ${R}^{n}.$ One can recognize this as the Novikov condition and it is sufficient to ensure that the
 stochastic exponential process 
 
  \begin{equation}\label{Nov}
  \tilde{Z}(t)=Z_{\tilde{\theta}}(t):=\exp\left\{-\int_{0}^{t}\tilde{\theta}^{T}(u)\,dW(u)-\frac{1}{2}\int_{0}^{t}\parallel\tilde{\theta}(u)\parallel^{2}\,du\right\}\end{equation}
is a (true) martingale. Moreover by the Girsanov theorem ( see Section $3.5$ in \cite{KAR}) \begin{equation}\label{Br}\tilde{W}(t)=W(t)+\int_{0}^{t}\tilde{\theta}(u)\,du \end{equation} is a Brownian motion under the equivalent martingale measure ${\tilde{Q}}$ defined by 

  \begin{equation}\label{Q}
  {\tilde{Q}}(A):=E[\tilde{Z}(T){\bf{1}}_{A}],\qquad A\in \mathcal{F}_T.\end{equation}
  Below we shall have occasion to write the process $\tilde{\theta}(t),$ as a function of the process $W(t),$ i.e., $\tilde{\theta}(t,\omega)=\tilde{\Theta}(t,W(\cdot,\omega))$ a.s. 
Now define a mapping $\bar{\Theta}$ of $(C[0,T])^{n}$ into $(L^{\infty}[0,T])^{n}$ by $\bar{\Theta}(y)(t):=\tilde{\Theta}(t,y(\cdot)).$ Then $\bar{\Theta}$ is \textit{nonanticipative} in the sense that $\bar{\Theta}(y)(t)=\bar{\Theta}(z)(t)$
 for $y, z$ such that $y(s)=z(s)$ on $0\leq s\leq t$; this is equivalent to demanding $\tilde{\Theta}(t,W(\cdot))$ be $\{\mathcal{F}_t\}$ -adapted. We assume that
 \begin{equation}\label{as1}
 \sup_{t}\parallel\tilde{\Theta}(t,0)\parallel<\infty,
 \end{equation} 
and $\bar{\Theta}$ is Frech\'et differentiable with derivative $\bar{\Theta}'(y),$ i.e. for $y\in(C[0,T])^{n},$
 $\bar{\Theta}'(y)$ is a bounded linear operator mapping  $(C[0,T])^{n}$ into $(L^{\infty}[0,T])^{n}$ such that $$\parallel \bar{\Theta}(y+h)-\bar{\Theta}(y)-\bar{\Theta}'(y)h \parallel_{\infty}=o(\parallel h \parallel_{T}),$$  where $\parallel\cdot\parallel_{T}$ is the norm in $(C[0,T])^{n},$ i.e. $\parallel y\parallel_{T}=\sup_{0\leq t\leq T}\parallel y(t)\parallel,$ and $\parallel \cdot\parallel_{\infty}$ is the norm in $(L^{\infty}[0,T])^{n}.$ 
 
 The Riesz Representation Theorem gives, for fixed $t,$ the existence of a unique finite signed measure $\tilde{\mu}$  such that
  \begin{equation}\label{meas}[\bar{\Theta}'(y)h](t)=\int_{0}^{t}\tilde{\mu}(ds,y,t)h(s).\end{equation}

 We require that
 for some $\delta>0$ and constant $K_{\delta}$
 \begin{equation}\label{as2}
 |\bar{\Theta}'(y_1)-\bar{\Theta}'(y_2)|_{t}\leq
 K_{\delta}\parallel y_1-y_2 \parallel_{T}^{\delta}
 ,\end{equation}
 and
 
 \begin{equation}\label{as3}
  \sup_{y,t}|\bar{\Theta}'(y)|_{t}=\sup_{y,t}\mathrm{var}_{[0,t]}(\tilde{\mu}(\cdot,y,t))<\infty,  
 \end{equation}
 where
 $$|\bar{\Theta}'(y)|_{t}=\mathrm{var}_{[0,t]}(\tilde{\mu}(\cdot,y,t)). $$
 
 Recall that \begin{equation}\label{10o}dW(t)=d\tilde{W}(t)-\tilde{\Theta}(t,W(\cdot))dt=d\tilde{W}(t)-\bar{\Theta}(W)(t)dt,\end{equation} and by the above assumptions this SDE
 (where the unknown process is $W$) has a unique solution $W$ (see Theorem $6,$ page $249$ in \cite{Protter}). Hence $W$ and $\tilde{W}$ generate the same filtration
 $\{\mathcal{F}_t\}_{0\leq t\leq T}.$
 
 In what follows we denote by ${\tilde{E}}$ the expectation operator with respect to the probability measure ${\tilde{Q}}.$

\section{Main Result}
Let $V(T)\in L^{2}[0,T]$ be an $\mathcal{F}_T$ adapted random variable. The martingale ${\bar{V}}(t):=\tilde{E}[V(T)|\mathcal{F}_{t}],$\,\,admits the stochastic integral representation
 \begin{equation}\label{8i}
\bar{V}(t)=\tilde{E}V(T)+\int_{0}^{t}\beta^{T}(u)d\tilde{W}(u),\qquad 0\leq t\leq T,
\end{equation}
 for some $\mathcal{F}_t-$adapted process $\beta$ which satisfies $\int_{0}^{T} ||\beta(u)||^{2}\,du<\infty$ a.s. (e.g., \cite{KAR}, Lemma $1.6.7$).
The process $\beta(t)$ of (\ref{8i}) can be computed explicitly by Haussmann's formula.  Let ${V}(T)=L(\tilde{Z},W),$ where $L:C[0,T]\times(C[0,T])^{n}\longrightarrow{R}.$ Assume further that
   \begin{eqnarray}\label{pis}
   L'(z,w)(v_1,v_2)&=&\int_{0}^{T}\mu(dt,z,w)(v_1(t),v_2(t)^{T})^{T},\,\,\,\mbox{a.s.}%\notag
   \end{eqnarray}
  for some finite signed measure ${\mu},$ and  
    \begin{equation}\label{sid} |L'(y_1)-L'(y_2)|\leq K(1+\parallel y_1\parallel_{T}^{\beta})(1+\parallel y_2 \parallel_{T}^{\beta})\parallel y_1-y_2\parallel_{T}^{\rho},\end{equation} for some positive $K, \beta, \rho.$ Let $Y=(\tilde{Z},W),$ so
 $$Y(t)=Y(0)+\int_0^t f(u,Y(\cdot))\,du+\int_0^t g(u,Y(\cdot))d\tilde{W}(u),\quad 0\leq t\leq T. $$
  Here with $y=(y_1,y_2),$ $y_1$ a scalar process and $y_2$ a $n$-dimensional process, 
 $$f(u,y)=\left( y_1\parallel\tilde{\Theta}(u,y_2)\parallel^2, \,\,\, -\tilde{\Theta}(u,y_2) \right)^T $$
 $$g(u,y)=\left( -y_1\tilde{\Theta}^{T}(u,y_2),\,\,\, I_n \right)^T, $$
 where $I_n$ is the $n\times n$ identity matrix. Let $\Phi(t,s)$ be the unique solution of the linearized equation 
$$d\Phi(t,s)=\left[\frac{\partial f}{\partial y}(t,Y(\cdot))\Phi(t,s)\right](t)dt+\left[\frac{\partial g}{\partial y}(t,Y(\cdot))\Phi(t,s)\right](t)d\tilde{W}(t),\qquad t>s,$$
   $$\Phi(s,s)=I_{n+1},\,\,\mbox{and}\,\,\,\,\Phi(t,s)=O_{n+1}\,\,\,\,\mbox{for}\,\,\,\,0\leq t<s.$$
     At this point one may wonder about the existence and uniqueness of $\Phi.$ The matrix  process $\Phi$ has $j-$th column $\Phi^{j}=(\Phi^{1,j} ,(\Phi^{2,j})^{T})^{T}.$
The scalar process $\Phi^{1,j}$ satisfies
\begin{eqnarray*}
d\Phi^{1,j}(t,s)&=&\left(\parallel\bar{\Theta}(W)(t)\parallel^2\Phi^{1,j}(t,s)+2\tilde{Z}(t)\bar{\Theta}^{T}(W)(t)[\bar{\Theta}'(W)\Phi^{2,j}(\cdot,s)](t)\right)dt\\ &  & \mbox{} - \left(\bar{\Theta}^{T}(W)(t)\Phi^{1,j}(t,s)+\tilde{Z}(t)[\tilde{\Theta}'(W)\Phi^{2,j}(\cdot,s)](t)\right)d\tilde{W}(t).
\end{eqnarray*}
The $n-$dimensional vector process $\Phi^{2,j}$ satisfies
\begin{equation}\label{55}
\frac{d\Phi^{2,j}(t,s)}{dt}=-\left[\bar{\Theta}'(W)\Phi^{2,j}(\cdot,s)\right](t)=-\int_{s}^{t}\tilde{\mu}(du,W,t)\Phi^{2,j}(u,s),
\end{equation}
where $\tilde{\mu}$ is the measure defined in (\ref{meas}). Let us notice that $\Phi^{2,1}\equiv0.$  For $j>1$ and some constant $K,$
\begin{eqnarray*}
\parallel\Phi^{2,j}(t,s)-e_{j-1} \parallel&=&\parallel\Phi^{2,j}(t,s)-\Phi^{2,j}(s,s) \parallel\\&=&\parallel\int_{s}^{t}-[\bar{\Theta}'(W)\Phi^{2,j}(\cdot,s)] (u)\,du\parallel\\&\leq&\int_{s}^{t}K\sup_{s\leq v\leq u}\parallel\Phi^{2,j}(v,s)\parallel\,du 
,\end{eqnarray*} where  $e_{i}$ is the $i^{th}$ column of $I_{n}$ and the last inequality comes from (\ref{as3}). Therefore
$$ \sup_{s\leq v\leq t}\parallel\Phi^{2,j}(v,s)\parallel\leq 1+K\int_{s}^{t}\sup_{s\leq v\leq u}\parallel\Phi^{2,j}(v,s)\parallel\,du ,$$ hence by Gronwall's inequality
\begin{equation}\label{oop}
 \sup_{s\leq v\leq t}\parallel\Phi^{2,j}(v,s)\parallel\leq e^{K(t-s)}.
\end{equation}
Existence and uniqueness of the process $\Phi$ it is now straightforward. Let us define
 \begin{equation}\label{12}\lambda(t,\omega):=\left[\int_{t}^{T}\mu(du,\tilde{Z}(\omega),W(\omega))\Phi(u,t,\omega)\right]g(t,(\tilde{Z}(\omega),W(\omega))), \end{equation}
 with $\mu$ the measure of (\ref{pis}).

 \begin{thm}\label{main22}
  (Clark-Haussmann formula)
 \begin{equation}\label{23}L(\tilde{Z}(\cdot),W(\cdot))=\int_{0}^{T}{\tilde{E}}(\lambda(t)\vert\mathcal{F}_{t})d\tilde{W}(t)+{\tilde{E}}L[\tilde{Z}(\cdot),W(\cdot)]. \end{equation}
  \end{thm}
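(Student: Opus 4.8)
The plan is to reduce \eqref{23} to the identification of the integrand $\beta$ in the martingale representation \eqref{8i}. Since that integrand is unique up to $d\tilde Q\otimes dt$ null sets, it suffices to show $\beta(t)=\tilde E[\lambda(t)\mid\mathcal F_t]$ for a.e. $t$. The quantity $\lambda(t)$ defined in \eqref{12} is precisely the chain-rule expression for the directional (Fr\'echet/Malliavin) derivative of $L(Y)$: perturbing the driving Brownian motion $\tilde W$ in a Cameron--Martin direction, the derivative of $Y(u)$ propagates through the linearized flow $\Phi(u,t)$, so that $D_t L(Y)=\big[\int_t^T\mu(du,\tilde Z,W)\Phi(u,t)\big]g(t,Y)=\lambda(t)$, using \eqref{pis} for $L'$ and the variational equation defining $\Phi$. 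Thus \eqref{23} is the Clark--Haussmann representation for the functional $L$ of the diffusion $Y=(\tilde Z,W)$, and when the coefficients $f,g$ are bounded with bounded derivatives --- equivalently when $\tilde\theta$ is bounded --- it is the classical theorem (Haussmann; Ocone--Karatzas, Theorem~2.5 in \cite{KAR2}). The whole task is therefore to remove the boundedness of $\tilde\theta$.

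To that end I would use an approximation scheme. Introduce bounded nonanticipative approximations $\tilde\Theta_m$ of $\tilde\Theta$ (a smooth truncation, so that \eqref{as1}, \eqref{as2}, \eqref{as3} and \eqref{sid} hold uniformly in $m$ and $\tilde\Theta_m\to\tilde\Theta$ pathwise). Each $\tilde\Theta_m$ induces a bounded market price of risk $\tilde\theta_m$, hence a density $\tilde Z_m$, a measure $\tilde Q_m$ (with expectation $\tilde E_m$) carrying a Brownian motion $\tilde W_m=W+\int_0^\cdot\tilde\theta_m\,du$, a diffusion $Y_m=(\tilde Z_m,W)$ solving the SDE with coefficients $f_m,g_m$, a flow $\Phi_m$, and an integrand $\lambda_m$. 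Because $\tilde\theta_m$ is bounded, the classical formula applies and yields
\[
L(\tilde Z_m,W)=\tilde E_m L(\tilde Z_m,W)+\int_0^T\tilde E_m[\lambda_m(t)\mid\mathcal F_t]\,d\tilde W_m(t).
\]
It then remains to pass to the limit $m\to\infty$ in each term.

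The main obstacle is exactly this limit passage, because both the integrator $\tilde W_m$ and the measure $\tilde Q_m$ depend on $m$. My preferred remedy is to express every stochastic integral with respect to the fixed $P$-Brownian motion $W$, writing $d\tilde W_m=dW+\tilde\theta_m\,dt$ and carrying $\tilde Z_m$ as the Radon--Nikodym density, so that the whole identity becomes an $L^2(P)$ (respectively $L^1(P)$) statement about integrals against the single process $W$. One then verifies: (i) $Y_m\to Y$ in $L^2(P)$, from the linear growth of $f,g$ (a consequence of \eqref{as1} and \eqref{as3}) and a Gronwall estimate; (ii) $\Phi_m\to\Phi$, using the uniform bound \eqref{oop} and the H\"older continuity \eqref{as2} of $\bar\Theta'$; (iii) $\lambda_m\to\lambda$, controlling $\int_t^T\mu(du)\Phi$ through the growth/H\"older bound \eqref{sid} on $L'$ together with \eqref{as3}; and (iv) the conditional expectations, densities $\tilde Z_m$, and stochastic integrals converge, with the uniform integrability needed to interchange limit and integration furnished precisely by the Novikov condition \eqref{narde}. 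Establishing these uniform-in-$m$ $L^2$ bounds --- so that the truncation does not inflate the relevant norms and the interchange of limit and stochastic integral is justified --- is the technical heart of the proof; once it is in place, the limits of the three terms reproduce \eqref{23}.
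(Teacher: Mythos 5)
Your overall strategy---truncate to reach a setting where a classical representation theorem applies, then pass to the limit---is the same as the paper's, but your specific truncation fails at the crucial first step. You truncate the market price of risk itself and claim that ``because $\tilde{\theta}_m$ is bounded, the classical formula applies,'' asserting the equivalence ``$f,g$ bounded with bounded derivatives $\Leftrightarrow$ $\tilde{\theta}$ bounded.'' That equivalence is false. The coefficients of the SDE for $Y_m=(\tilde{Z}_m,W)$ are $f_m(u,y)=\left(y_1\|\tilde{\theta}_m(u,y_2)\|^2,\,-\tilde{\theta}_m(u,y_2)\right)^T$ and $g_m(u,y)=\left(-y_1\tilde{\theta}_m^T(u,y_2),\,I_n\right)^T$, and their Fr\'echet derivatives in the $y_2$ direction, e.g. $-y_1\tilde{\Theta}_m'$ and $2y_1\tilde{\theta}_m^T\tilde{\Theta}_m'$, still grow linearly in the density variable $y_1$, which is an unbounded process. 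So hypothesis $H_3$ of \cite{HA}---the very hypothesis whose failure is the whole point of the theorem---still fails after your truncation, and Theorem 1 of \cite{HA} cannot be invoked to produce the representation with integrand $\lambda_m$. (If instead by ``classical formula'' you mean Ocone--Karatzas Theorem 2.5, i.e. \eqref{=}, then bounded $\tilde{\theta}_m$ does suffice, but that theorem delivers the Malliavin-form integrand, not the flow/Fr\'echet-form $\lambda_m$ of \eqref{12}; identifying the two in this path-dependent setting is an additional nontrivial step you have not supplied.) The paper's truncation is designed precisely to cure this: it truncates the density argument as well, replacing $y_1$ by $\phi_k(y_1)$ in $f_k$ and $g_k$, which makes all the relevant derivatives bounded for fixed $k$.

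The second, compounding problem is one you identify yourself but do not resolve: your scheme changes the measure $\tilde{Q}_m$ and the integrator $\tilde{W}_m$ with $m$, so each approximate identity lives under a different measure, and the limit passage requires comparing conditional expectations across measures (Bayes-rule quotients $E[\tilde{Z}_m(T)\lambda_m(t)|\mathcal{F}_t]/\tilde{Z}_m(t)$ and the like). The paper avoids this entirely: it truncates only the coefficients of the SDE driven by the \emph{fixed} $\tilde{W}$ under the \emph{fixed} $\tilde{Q}$, so $\tilde{Z}_k$ is not the stochastic exponential of a truncated drift but the solution of a truncated SDE; it coincides with $\tilde{Z}$ up to the stopping times $\tau_k=\inf\{s:\tilde{Z}_k(s)\vee|\bar{\Theta}(W)(s)|\geq k\}\uparrow T$, and the limit passage (uniform integrability of $L(\tilde{Z}_k,W)$ via \eqref{sid} and the moment bounds \eqref{ris}, plus $L^2$ convergence $\lambda_k\to\lambda$ via It\^o's isometry, \eqref{show1}, \eqref{show2}, and the flow bounds) takes place on one filtered space with one integrator and one expectation operator throughout. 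Your items (i)--(iv) are left as a sketch precisely at the point where these uniform bounds and the changing-measure comparisons would have to be carried out, so the proposal as written does not constitute a proof.
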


\begin{proof}

See the appendix.

\end{proof}

\subsection{ An Example}
 Consider a stochastic volatility model, in which the market price of risk follows an Ornstein-Uhlenbeck process.
 This model is considered by both \cite{KIM} and \cite{Wachter}. 
 In this example, the financial market consists of one bond and one stock whose price $S$ is given by
 $$\frac{dS(t)}{S(t)} =\mu(t)\,dt
+\sigma(t)\,dW(t).$$ Let $U$ be the Ornstein-Uhlenbeck process $$dU(t)=(\alpha-\beta U(t))dt+vdW(t),$$
In this model
 \begin{equation}\label{3z}\tilde{\theta}(t)=U(t)\end{equation} which is an unbounded process. Condition (\ref{as1}) is obviously satisfied and one can prove that (\ref{narde}) hold for $T$ below some thereshold. 
In fact  \begin{equation}\label{e3}U(t)=e^{-\beta t}U(0)+\left(\frac{\alpha}{\beta}+\frac{v^{2}}{2\beta}\right)(1-e^{-\beta t})+ve^{-\beta t}\int_{0}^{t}e^{\beta u}dW(u),\end{equation}
whence $$ [\bar{\Theta}'(W)(\gamma)](t)  =v\left[\gamma(t)-\beta\int_{0}^{t}e^{\beta( u-t)}\gamma(u)\,du\,\right],$$ so (\ref{as2}) and (\ref{as3}) hold.

Next, assume that the mean variance optimization problem is considered within this paradigm. It can be shown by using the martingale approach (see page 165 in \cite{CZ}) that the optimal wealth is
$$ V(T)=\lambda_1 + \lambda_2 \tilde{Z}(T),$$ 
for some Lagrange multipliers $\lambda_1, \lambda_2$ which can be computed. In order to find the mean variance optimal portfolio one has to apply Theorem 1 to 
$$L(\tilde{Z}(\cdot),W(\cdot))=\lambda_1 + \lambda_2 \tilde{Z}(T).$$ Notice that the assumptions (\ref{pis}), (\ref{sid}) are met in this case.

\section*{Appendix}
\addcontentsline{toc}{section}{Appendix}

\noindent {\bf{Proof of Theorem \ref{main22}}}: Let us notice that we cannot apply Clark-Haussmann formula right away because hypothesis $H_3$ in \cite{HA} fails. However we can get around this by an approximation argument. Let $\phi_k$ be a sequence of bounded differentiable
 functions on ${R}$ with H\"older continuous (of order $\delta$  (see (\ref{as2}))) derivatives, such that $\phi_k(x)=x,$ if $|x|\leq k,$\,\,  $|\phi_k(x)|\leq |x|,$\,\, $|\phi'_k(x)|\leq1,$ and define
 $$\phi_k(\tilde{\theta}):=(\phi_k(\tilde{\theta_{1}}),\cdots,\phi_k(\tilde{\theta_{n}})),$$
 
 $$f_{k}(u,y)=\left( \phi_k(y_1)\parallel\phi_{k}(\tilde{\theta}(u,y_2))\parallel^2,\,\,\,-\tilde{\theta}(u,y_2)\right)^T,$$$$ g_{k}(u,y)=\left(-\phi_k(y_1)\phi_k(\tilde{\theta}(u,y_2)), \,\,\,I_n\right). $$

 It is easily seen that for fixed $k,$ the functions $f_k(\cdot,\cdot)$ and $g_k(\cdot,\cdot)$ satisfy
 the conditions of Theorem\,$1$ in \cite{HA}. Let us denote by $Y_{k}$ the unique strong solution of the SDE 
 $$dY(t)=f_{k}(t,Y(\cdot))dt+g_{k}(t,Y(\cdot))d\tilde{W}(t),$$ (the existence of a unique strong solution follows since
  $f_k(\cdot,\cdot)$ and $\sigma_k(\cdot,\cdot)$ are Lipschitz). In fact $Y_{k}(t)=(\tilde{Z}_{k}(t),W(t))$ for some process $\tilde{Z}_{k}.$ It turns out that the process  $\tilde{Z}_{k}$ is strictly positive.
  Indeed when $\tilde{Z}_{k}$ gets close to zero it satisfies 
   $$d\tilde{Z}_{k}(t)=\tilde{Z}_{k}(t)(\parallel\phi_{k}(\tilde{\theta})\parallel^2\,dt-\phi_{k}(\tilde{\theta})d\tilde{W}(t)) ,$$ hence the positivity.

   Following \cite{HA}, let $\Phi_k(t,s)$ be the $(n+1)\times (n+1)$ matrix which solves the linearized equation $$dZ(t)=\left[\frac{\partial f_{k}}{\partial x}(t,Y_{k}(\cdot))Z(\cdot)\right](t)dt+\left[\frac{\partial g_k}{\partial x}(t,Y_{k}(\cdot))Z(\cdot)\right](t)d\tilde{W}(t),\qquad t>s,$$
 $$\Phi_{k}(s,s,\omega)=I_{n+1},\,\,\mbox{and}\,\,\,\,\Phi_{k}(t,s,\omega)=O_{n+1}\,\,\,\,\mbox{for}\,\,\,\,0\leq t<s,$$ with $O_{n+1}$ the $(n+1)\times(n+1)$ matrix with zero entries.
 Next with $\mu$ of (\ref{pis}) we define $$\lambda_{k}(t,\omega):=\left[\int_{t}^{T}\mu(du,(\tilde{Z}_{k}(\omega),W(\omega)))\Phi_k(u,t,\omega)\right]g_{k}(t,(\tilde{Z}_{k}(\omega),W(\omega))). $$ Theorem $1$ in \cite{HA} gives the following representation (Haussmann's formula)
\begin{equation}\label{09}L(\tilde{Z}_{k}(\cdot),W(\cdot))=\int_{0}^{T}{\tilde{E}}(\lambda_{k}(t)\vert\mathcal{F}_{t})d\tilde{W}(t)+{\tilde{E}}L[\tilde{Z}_{k}(\cdot),W(\cdot)]\end{equation}
 By using Burkholder-Davis-Gundy and Gronwal inequalities one can get that  for every real number $r$
\begin{equation}\label{ris}
{\tilde{E}}\sup_{0\leq t\leq T}\tilde{Z}_{k}^{r}(t)<\infty,
\end{equation}
uniformly in $k$ and
\begin{equation}\label{2}
{\tilde{E}}\sup_{0\leq t\leq T}\tilde{Z}^{r}(t)<\infty.
\end{equation}
With the notations $a\vee b:=\max(a,b)$
 and $|\tilde{\theta}|:=\max\{|\tilde{\theta}_{i}|:\,i=1,\cdots,n\},$ let us define the following sequence of stopping times
  $$\tau_{k}:=\inf\{s\leq T,\,\,\mbox{such\,that},\,\,\tilde{Z}_{k}(s)\vee|\bar{\Theta}(W)(s)|\geq k\} $$
We claim that
 $\tilde{Z}_{k}(t)=\tilde{Z}(t)$ for $t\leq\tau_{k}.$ Moreover $\tau_{k}\uparrow T$\,\,$P$\,a.s. 
Indeed, let us notice that on $[0,\tau_{k}],$\, $f_k=f$ and $g_k=g,$ hence $\tilde{Z}_{k}(t)=\tilde{Z}(t)$ (since it satisfies the same SDE ). Therefore \begin{eqnarray}\tau_{k}&:=&\inf\{s\leq T,\,\,\mbox{such\,that},\,\,\tilde{Z}_{k}(s)\vee|\bar{\Theta}(W)(s)|\geq k\}\\&=&\inf\{s\leq T,\,\,\mbox{such\,that},\,\,\tilde{Z}(s)\vee|\bar{\Theta}(W)(s)|\geq k\}.\end{eqnarray} Thus for $t\leq\tau_k,$ one has $\tilde{Z}(t)\vee|\bar{\Theta}(W)(t)|\leq k,$ so $t\leq\tau_{k+1}.$
   This proves $\tau_{k+1}\geq\tau_{k}.$ For a fixed path $\omega,$ by (\ref{2}) ${\sup_{0\leq s\leq T}}\tilde{Z}(s)\vee|\bar{\Theta}(W)(s)|\leq K(\omega),$ showing that  $\tau_{k}\uparrow T$\,\,$P$\,\,a.s. Thus, the assumptions on $L$ imply that
$$L(\tilde{Z}_{k_{}}(\cdot),W(\cdot))\longrightarrow L(\tilde{Z}(\cdot),W(\cdot))\quad \mbox{a.s.}$$   
In light of (\ref{sid}) and (\ref{ris}) the sequence $L(\tilde{Z}_{k_{}}(\cdot),W(\cdot))$ is uniform integrable, therefore  
$${\tilde{E}}L(\tilde{Z}_{k}(\cdot),W(\cdot)) \longrightarrow{\tilde{E}}L(
\tilde{Z}(\cdot),W(\cdot)).$$
 Next we claim that
 $$\int_{0}^{T}{\tilde{E}}(\lambda_{k_{}}(t)\vert\mathcal{F}_{t})d\bar{W}(t)\longrightarrow\int_{0}^{T}{\tilde{E}}(\lambda(t)\vert\mathcal{F}_{t})d\bar{W}(t),$$
$P$ a.s. Indeed, by It\^o's isometry it suffices to prove that
$${\tilde{E}}\int_{0}^{T}\|{\tilde{E}}((\lambda_{k}(t)-\lambda(t))\vert\mathcal{F}_{t})\|^{2}\,dt \longrightarrow 0.$$
Moreover $${\tilde{E}}\int_{0}^{T}\|{\tilde{E}}(\lambda_{k}(t)-\lambda(t)\vert\mathcal{F}_{t})\|^{2}\,dt\leq \int_{0}^{T}{\tilde{E}}\|\lambda_{k}(t)-\lambda(t)\|^2\,dt,$$
due to Jensen's inequality. Hence we want to prove
\begin{equation}\label{023}
\int_{0}^{T}{\tilde{E}}\|\lambda_{k}(t)-\lambda(t)\|^2\,dt\longrightarrow 0.
\end{equation}
Let us recall that
 $$\lambda_{k}(t,\omega):=\left[\int_{t}^{T}\mu(du,(\tilde{Z}_{k}(\omega),W(\omega)))\Phi_k(u,t,\omega)\right]g_{k}(t,(\tilde{Z}_{k}(\omega),W(\omega))),$$ and
 $$\lambda(t,\omega):=\left[\int_{t}^{T}\mu(du,(\tilde{Z}(\omega),W(\omega)))\Phi(u,t,\omega)\right]g(t,(\tilde{Z}(\omega),W(\omega))). $$
Because of $\tau_{k}\uparrow T$\,\,\,$P$\,\,a.s.\, and $\tilde{Q}\sim P,$ 
\begin{eqnarray*}\lefteqn{[g_{k}(t,(\tilde{Z}_{k}(\omega),W(\omega)))-g(t,(\tilde{Z}(\omega),W(\omega)))]}\\&&\mbox{}=[g_{k}(t,(\tilde{Z}_{k}(\omega),W(\omega)))-g(t,(\tilde{Z}(\omega),W(\omega)))]1_{\{\tau_{k}\leq t\}}\longrightarrow 0,\,\, \tilde{Q}\,\,a.s. \end{eqnarray*} 
In order to prove the claim it suffices to show
\begin{equation}\label{show1}
{\tilde{E}}\left\|\int_{t}^{T}\mu(du,(\tilde{Z}_{k},W))\Phi_k(u,t)-\int_{t}^{T}\mu(du,(\tilde{Z},W))\Phi(u,t)\right\|^{2}\rightarrow 0,
\end{equation} and
\begin{equation}\label{show2}
{\tilde{E}}\,\|\lambda_{k}(t)\|^{2+\epsilon}\leq K_{1}
,\end{equation} for some $\epsilon>0$ and a constant $K_1$ independent of $k$ and $t.$ Indeed (\ref{show1}) give the almost sure convergence (up to a subsequence) of $\lambda_{k}$ to $\lambda.$ Moreover (\ref{show2}) implies the uniform convergence of $\|\lambda_{k}\|^{2}$, and also yields (\ref{023}) by Lebesque Dominated Convergence Theorem. 
 To proceed, we need some bounds on $\Phi_{k}(t,\cdot),$ and $\Phi(t,\cdot)$ independent of $k$ and $t.$  Cf (\ref{oop})
\begin{equation}\label{oop1}
 \sup_{s\leq v\leq t}\parallel\Phi^{2,j}(v,s)\parallel\leq e^{K_{2}(t-s)}.
\end{equation}
Moreover $\Phi_{k}^{2j}=\Phi^{2j}$ and $\Phi^{21}\equiv0.$ 
Furthermore we prove for $j=1,\cdots,n+1$
\begin{equation}\label{112}
{\tilde{E}}\,[\,\sup_{0\leq t\leq T}|\Phi_{k}^{1j}(t,s)|]^{m}\leq K_3
,\end{equation}and

\begin{equation}\label{113}
{\tilde{E}}\,[\,\sup_{0\leq t\leq T}|\Phi^{1j}(t,s)|]^{m}\leq K_4
,\end{equation} 
for some constants $K_3,$ $K_4,$ and $m>1.$  We prove (\ref{113}), and (\ref{112}) follows similarly. These arguments conclude the proof.
%%%%%%%%%%%%%%%%%%%%%%%% referenc.tex %%%%%%%%%%%%%%%%%%%%%%%%%%%%%%
% sample references
% %
% Use this file as a template for your own input.
%
%%%%%%%%%%%%%%%%%%%%%%%% Springer-Verlag %%%%%%%%%%%%%%%%%%%%%%%%%%
%
% BibTeX users please use
% \bibliographystyle{}
% \bibliography{}
%

\end{document}